\definecolor{darkblue}{rgb}{0.0,0.0,0.6}
\title{Collinear triples and quadruples in Cartesian products in $\mathbb{F}_p^2$}
\author{Giorgis Petridis}
\date{}
\theoremstyle{plain}
\newtheorem{theorem}{Theorem}
\newtheorem{lemma}[theorem]{Lemma}
\newtheorem{proposition}[theorem]{Proposition}
\theoremstyle{definition}
\newtheorem*{notation}{Notation}
\renewcommand*{\backref}[1]{}
\renewcommand*{\backrefalt}[4]{%
    \ifcase #1 (Not cited.)%
    \or        (p.\,#2)%
    \else      (pp.\,#2)%
    \fi}
\newcommand{\F}{\mathbb{F}_p} 
\newcommand{\Fq}{\mathbb{F}_q} 
\newcommand{\cI}{\mathcal{I}} 
\newcommand{\supp}{\mathrm{supp}} 
\newcommand{\ds}{\displaystyle} 
\newcommand{\f}{\frac} 
\begin{document}

\onehalfspacing

\pagenumbering{arabic}

\setcounter{section}{0}

\bibliographystyle{plain}

\maketitle

\begin{abstract}
In this informal note, which has been absorbed in~\cite{GPSumProd}, we combine a recent point-line incidence bound of Stevens and de Zeeuw with an older lemma of Bourgain, Katz and Tao to bound the number of collinear triples and quadruples in a Cartesian product in $\mathbb{F}_p^2$.
\end{abstract}

\section[Introduction]{Introduction}
\label{Intro}

\let\thefootnote\relax\footnotetext{The author is supported by the NSF DMS Grant 1500984.}

Let $A \subseteq \F$ be a set in a prime order finite field of odd characteristic. In this note, which surpasses the result from~\cite{GPProdDiff}, we combine a recent point-line incidence bound of Stevens and de Zeeuw~\cite{SdZ} with an older lemma of Bourgain, Katz and Tao~\cite{BKT2004} to obtain an improved bound for $T(A)$, the number of collinear triples in $A \times A$, for large sets; and an optimal bound for $Q(A)$, the number of collinear quadruples in $A \times A$.

By a collinear triple we mean an ordered triple $(u,v,w) \in (A \times A) \times  (A \times A) \times  (A \times A)$ such that $u$, $v$ and $w$ are all incident to the same line. So, for example, any point $u \in A \times A$ gives rise to the collinear triple $(u,u,u)$. Collinear quadruples are defined similarly. 

The quantities $T(A)$ and $Q(A)$ can be expressed in terms of the incidence function associated with $A \times A$. For a line $\ell \subset \F^2$, $i(\ell)$ equals the number of points in $A \times A$ incident to $\ell.$ Then
\[
T(A) = \sum_{\text{all lines } \ell} i(\ell)^3 \quad \text{ and } \quad Q(A) = \sum_{\text{all lines } \ell} i(\ell)^4.
\]

Let us study in some detail what is known for collinear triples. The contribution to collinear triples coming from the $|A|$ horizontal and the $|A|$ vertical lines incident to $|A|$ points in $A \times A$ is $2|A|^4$. All other collinear triples can be counted by the number of solutions to
\begin{equation}\label{Alg T(A)}
\frac{a_1 - a_2}{a_3 - a_4} = \frac{a_1 - a_5}{a_3-a_6} \neq 0, \; a_i \in A, \, a_3-a_4, a_3 - a_6 \neq 0.
\end{equation}
It follows from this that the expected number of collinear triples of a random set (where elements of $\F$ belong to $A$ independently with probability $|A|/p$) is $\ds \frac{|A|^6}{p} + 2 |A|^4$. This is because for each 5-tuple $(a_1,\dots, a_5)$ there is a unique element $a_6 \in \F$ that satisfies~\eqref{Alg T(A)} and it belongs to $|A|$ with probability $|A|/p$. 

Another interesting example is that of sufficiently small arithmetic progressions. First note that in general $T(A)$ equals the number of solutions to
\begin{equation*}
(a_1 - a_2)(a_3 - a_4) = (a_1 - a_5)(a_3-a_6) \; a_i \in A.
\end{equation*}
plus $O(|A|^4)$. So it equals
\[
\sum_{a_1, a_3 \in A} \sum_{x} f_{a_1,a_3}^2(x) + O(|A|^4),
\]
where $f_{a_1,a_3}(x)$ is the number of ways one can express $x$ as a product $(a_1 -a_2)(a_3-a_4)$ with $a_2,a_4 \in A$. 

Observe that for all $a_1, a_3 \in A$, $\ds \sum_{x} f_{a_1,a_3}(x) = |A|^2$ because each pair $(a_2,a_4) \in A \times A$ contributes 1 to the sum. 

Applying the Cauchy-Schwartz inequality gives
\[
T(A) = \sum_{a_1, a_3 \in A} \sum_{x} f_{a_1,a_3}^2(x) \geq \sum_{a_1,a_3 \in A} \f{\left( \sum_{x} f_{a_1,a_3}(x) \right)^2}{|\supp(f_{a_1,a_3})|} =  \sum_{a_1,a_3 \in A} \f{|A|^4}{|\supp(f_{a_1,a_3})|}.
\]
Now take $A = \{1,\dots, \sqrt{p}/2\} \subset \mathbb{Z}$. Then for all $a_i\in A$ the product $(a_1-a_2)(a_3-a_4)$ $\{1,\dots , p\}$. This means that the support of $f_{a_1,a_3}$ is the same whether $A$ is taken to be a subset of $\mathbb{Z}$ or of $\F$. Ford has shown in~\cite{Ford2008} that the support of $f_{a_1,a_3}$ (in $\mathbb{Z}$ and hence) in $\F$ is $O(|A|^2 / \log(|A|)^\gamma)$ for some absolute constant $\gamma < 1$. Substituting above implies that for $A = \{1,\dots, \sqrt{p}/2\} \subset \F$, we have $T(A) = \Omega(|A|^4 \log(|A|)^\gamma)$.

Because of these two examples, it is natural to expect that the bound
\[
T(A) = O\left( \frac{|A|^6}{p} + \log(|A|) |A|^4 \right)
\]
is correct up to perhaps logarithmic factors. Over the reals, Elekes and Ruzsa observed in~\cite{Elekes-Ruzsa2003} that the above inequality follows from the Szemer\'edi-Trotter point-line incidence theorem~\cite{Szemeredi-Trotter1983}. 

Far less is currently known. As is explained in Section~\ref{Easy T(A)}, it is straightforward to obtain
\[
\left | T(A) - \left(\frac{|A|^6}{p} + 2 |A|^4 \right) \right| \leq p |A|^2.
\]
It follows that if $|A| = \Omega(p^{2/3})$, then $T(A) = \Theta(|A|^6/p).$

In the range $|A| = O(p^{2/3})$, Aksoy Yazici, Murphy, Rudnev and Shkredov~\cite[Proposition~5]{AYMRS}, building on Rudnev's breakthrough result in \cite{Rudnev}, established the bound
\[
T(A) = O(|A|^{9/2}).
\]
While very strong, the result of Aksoy Yazici, Murphy, Rudnev and Shkredov does not improve the range of $|A|$ where $T(A) = O(|A|^6/p)$.

Combining the two bounds above gives
\[
T(A) = O\left( \frac{|A|^6}{p} + |A|^{9/2}\right).
\]

Similar results are true for collinear quadruples. The expected number of collinear quadruples is $\ds \frac{|A|^8}{p^2} + 2|A|^5$. The result of Aksoy Yazici, Murphy, Rudnev and Shkredov implies that $Q(A) = O(|A|^{11/2})$ when $|A| = O(p^{2/3})$. One expects that the correct order of magnitude up to logarithmic factors is
\[
Q(A) = O\left( \frac{|A|^8}{p^2} + \log(|A|) |A|^{5}\right). 
\] 
Once again large random sets and small arithmetic progressions offering (nearly) extremal examples.

We offer an improvement on the know bound for $T(A)$ when $|A| = \Omega(p^{1/2})$ and establish a nearly best possible bound for $Q(A)$.

\begin{theorem}\label{T(A)}
Let $A \subseteq \F$. 
\begin{enumerate}
\item The number of collinear triples in $A \times A$ satisfies 
\[
T(A) = O\left( \frac{|A|^6}{p} + p^{1/2} |A|^{7/2} \right).
\] 
So there is at most a constant multiple of the expected number of collinear triples when $|A| = \Omega( p^{3/5})$. 
\item The number of collinear quadruples in $A \times A$ satisfies 
\[ 
Q(A) = O\left( \frac{|A|^8}{p^2} + \log(|A|) |A|^5 \right),
\] which is optimal up to perhaps logarithmic factors.
\end{enumerate}
\end{theorem}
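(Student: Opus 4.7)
My plan is to bound $Q(A)$ and $T(A)$ via a dyadic decomposition of lines by incidence count, applying the Stevens--de Zeeuw Cartesian-product incidence bound at each scale. Setting
\[
L_j \;=\; \{\ell \subset \F^2 : 2^j \leq i(\ell) < 2^{j+1}\}, \qquad 0 \leq j \leq \log_2|A|,
\]
the axis-parallel lines account for $2|A|^4$ in $T(A)$ and $2|A|^5$ in $Q(A)$ (safely inside the target), and for oblique lines $i(\ell) \leq |A|$ so the dyadic sum has $O(\log|A|)$ levels. Stevens--de Zeeuw applied to $A\times A$ and $L_j$ gives
\[
2^j |L_j| \;\leq\; I(A\times A, L_j) \;\lesssim\; |A|^{5/4}|L_j|^{3/4} + |L_j| + |A|^2,
\]
yielding the crucial estimate $|L_j| \lesssim |A|^5/2^{4j}$ in the principal regime.

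For Part~(2), this produces the contribution $2^{4j}|L_j| \lesssim |A|^5$ per dyadic level, and summing the $O(\log|A|)$ levels gives the main term $\log(|A|)\,|A|^5$. The $|A|^8/p^2$ contribution comes from the complementary dense regime where the Stevens--de Zeeuw hypothesis fails; it can be extracted by a trivial expected-value estimate, analogous to the easy bound for $T(A)$ recalled in the introduction and proved in Section~\ref{Easy T(A)}. The Bourgain--Katz--Tao lemma is invoked to bridge these two regimes cleanly.

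For Part~(1), I would additionally use the identity $\sum_\ell i(\ell)^2 = |A|^4 + p|A|^2$, which forces $|L_j| \leq (|A|^4 + p|A|^2)/2^{2j}$. Combining with Stevens--de Zeeuw, the contribution of level $j$ to $T(A) = \sum_j 2^{3j}|L_j|$ is
\[
2^{3j}|L_j| \;\lesssim\; \min\!\Bigl( \frac{|A|^5}{2^j},\; 2^j(|A|^4 + p|A|^2) \Bigr).
\]
Both expressions are geometric in $j$ on opposite sides of their common balance, so the dyadic sum is controlled, up to a constant, by the balance value---no logarithmic loss occurs. When $|A| \lesssim p^{1/2}$ the balance lies at $2^{j^*} \sim |A|^{3/2}/p^{1/2}$ and yields $\lesssim p^{1/2}|A|^{7/2}$; when $|A| \gtrsim p^{1/2}$ the easy bound of Section~\ref{Easy T(A)} already delivers $T(A)=O(|A|^6/p)$, absorbing the target. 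The principal obstacle I anticipate is verifying the Stevens--de Zeeuw hypothesis at each dyadic level (which constrains $|L_j|$ relative to $p$) and invoking the Bourgain--Katz--Tao lemma cleanly in the dense regime so that the $|A|^8/p^2$ (resp.\ $|A|^6/p$) term absorbs the residual contributions.
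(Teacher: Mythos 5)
Your overall architecture---dyadic decomposition, Stevens--de Zeeuw for the rich lines, a first-moment bound for the poor lines, and Bourgain--Katz--Tao to bridge the middle---is exactly the paper's, and your treatment of $Q(A)$ is essentially correct. The ``bridge'' you defer to is the paper's medium range $2|A|^2/p \leq i(\ell) \leq 2|A|^{3/2}/p^{1/2}$, where $\sum i(\ell)^4 \leq (\max i(\ell))^2 \sum i(\ell)^2 = O(|A|^5)$; and the Stevens--de Zeeuw hypothesis $|A|\,|L_M| = O(p^2)$ in the top range is verified precisely by the Bourgain--Katz--Tao bound $|L_M| \leq 4p|A|^2/M^2 \leq p^2/|A|$ once $M \geq 2|A|^{3/2}/p^{1/2}$, so the obstacle you flag at the end is surmountable by the very lemma you cite.

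Part (1), however, has a genuine gap. You control the level sets by the raw second moment $\sum_\ell i(\ell)^2 = |A|^4 + p|A|^2$, so your per-level bound is $2^{3j}|L_j| \lesssim \min\bigl(|A|^5/2^j,\ 2^j(|A|^4+p|A|^2)\bigr)$. When $|A| \geq p^{1/2}$ the $|A|^4$ term dominates, the balance sits at $2^{j^*} \sim |A|^{1/2}$, and the dyadic sum evaluates to $|A|^{9/2}$---the Aksoy Yazici--Murphy--Rudnev--Shkredov bound, not the claimed $O(|A|^6/p + p^{1/2}|A|^{7/2})$. Your fallback, that the easy bound of Section~\ref{Easy T(A)} gives $T(A)=O(|A|^6/p)$ once $|A| \gtrsim p^{1/2}$, is false: its error term is $p|A|^3$, which is $O(|A|^6/p)$ only for $|A| = \Omega(p^{2/3})$. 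So your argument fails exactly on the range $p^{1/2} \lesssim |A| \lesssim p^{2/3}$, which contains the threshold $p^{3/5}$ that is the whole point of part (1). The repair is the second half of Lemma~\ref{BKT i}: for lines with $i(\ell) \geq 2|A|^2/p$ one has $i(\ell) \leq 2\bigl(i(\ell) - |A|^2/p\bigr)$, hence $|L_j| \leq 4p|A|^2/2^{2j}$ with no $|A|^4$ term, while the discarded lines with $i(\ell) < 2|A|^2/p$ contribute $O(|A|^6/p)$ by the first moment $\sum_\ell i(\ell) = (p+1)|A|^2$. With that substitution the balance is always at $2^{j^*} \sim |A|^{3/2}/p^{1/2}$ and yields $p^{1/2}|A|^{7/2}$, which is precisely how the paper argues.
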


The proof of the theorem is based on a recent point-line incidence bound for Cartesian products proved by Stevens and de Zeeuw~\cite[Theorem 4]{SdZ} and a lemma of Bourgain, Katz and Tao~\cite[Lemma~2.1]{BKT2004}. A more precise version of Theorem~\ref{T(A)} and applications to sum-product questions in $\F$ are given in~\cite{GPSumProd}.
 
\begin{notation}
We use Landau's notation so that both statements $f = O(g)$ and $g = \Omega(f)$ mean there exists an absolute constant $C$ such that $f \leq C g$ and $f = \Theta(g)$ stands for $f=O(g)$ and $f= \Omega(g)$. The letter $p$ denotes an odd prime, $\F$ the finite field with $p$ elements and $\Fq^2$ the $2$-dimensional vector space over $\F$. For a line $\ell$, $i(\ell)$ represents the number of points in $A \times A$ incident to $\ell.$ 
\end{notation}

\section[The two ingredients]{The two ingredients}

Let us state the two main ingredients of the proof of Theorem~\ref{T(A)} and some straightforward consequences. We begin with the theorem of Stevens and de Zeeuw.

\begin{theorem}[Stevens and de Zeeuw]\label{SdZ}
Let $A \subseteq \F$ and $L$ be a collection of lines in $\F^2$. Suppose that $|A| |L| = O( p^2 )$. The number of point-line incidences between $A \times A$ and $L$ satisfies $ \cI(A \times A , L ) = O(|L|^{3/4} |A|^{5/4})$.
\end{theorem}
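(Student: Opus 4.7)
The plan is to encode the point-line incidence problem as a point-plane incidence problem in $\F^3$ and apply Rudnev's point-plane incidence theorem, combined with Cauchy-Schwarz. Let $r(p) := |\{\ell \in L : p \in \ell\}|$ for $p \in A \times A$, so $\cI := \cI(A \times A, L) = \sum_p r(p)$, and by Cauchy-Schwarz $\cI^2 \leq |A|^2 Q$ where $Q := \sum_p r(p)^2$. It therefore suffices to bound $Q$. Vertical lines of $L$ contribute at most $|A|^2$ to $\cI$ and are handled separately; the remaining non-vertical $\ell \in L$ are parameterized as $\ell = (s, t)$ with $\ell = \{y = sx + t\}$.

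Lift to $\F^3$ by taking as points
\[
\mathcal{S} := \{(s, a, b) \in \F^3 : (a, b) \in A \times A, \; (s, b - sa) \in L\},
\]
so that $|\mathcal{S}| = \cI$, each triple encoding one point-line incidence. For each $\ell' = (s', t') \in L$, let $\Pi_{\ell'} := \{(s, a, b) : b - s'a = t'\}$; this is a plane in $\F^3$ with free $s$-coordinate. Unwinding the definitions, the number of point-plane incidences $\cI(\mathcal{S}, \{\Pi_{\ell'}\}_{\ell' \in L})$ equals $Q$: indeed $\mathcal{S} \cap \Pi_{\ell'} = \{(s, a, b) : (a, b) \in (A \times A) \cap \ell',\, (s, b - sa) \in L\}$ has size $\sum_{p \in (A \times A) \cap \ell'} r(p)$, and summing over $\ell' \in L$ recovers $\sum_p r(p)^2$. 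Rudnev's point-plane theorem (whose characteristic hypothesis $|\mathcal{S}| = O(p^2)$ is ensured by $|\mathcal{S}| = \cI \leq |A||L| = O(p^2)$, and whose ordering requirement $|\mathcal{S}| \leq |L|$ is arranged by a routine dyadic decomposition of $L$ by line-multiplicity) then yields
\[
Q \ll |L| \cI^{1/2} + K |L|,
\]
where $K$ is the maximum number of points of $\mathcal{S}$ that are collinear in $\F^3$.

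The main obstacle is controlling the constant $K$. Collinear points of $\mathcal{S}$ fall into two categories: those with fixed $(a, b) \in A \times A$ and varying $s$, for which $K$ is bounded by $\max_p r(p)$; and those whose $(a, b)$-projections are collinear in $\F^2$, for which $K \leq |A|$ since $(a, b) \in A \times A$. By a standard removal argument, points with $r(p)$ exceeding a well-chosen threshold $T$ contribute at most $O(|L|^2/T)$ incidences (as pairs of distinct lines meet in at most one point of $\F^2$), an amount that can be absorbed into the target bound for a suitable $T$; after this removal the non-rich configuration satisfies $K \ll (|A||L|)^{1/2}$. Substituting back into $\cI^2 \leq |A|^2 Q$ and extracting the dominant term gives either $\cI \ll |A|^{4/3} |L|^{2/3}$ or $\cI \ll |A| |L|^{1/2} K^{1/2} \ll |A|^{5/4} |L|^{3/4}$, both at most the target $O(|L|^{3/4} |A|^{5/4})$ in the regime $|L| \geq |A|$; the complementary case $|L| \leq |A|$ is handled by the trivial bound $\cI \leq |A||L| \leq |A|^{5/4} |L|^{3/4}$.
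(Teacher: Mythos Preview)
The paper does not prove Theorem~\ref{SdZ}; it is quoted from~\cite{SdZ} and used as a black box. So your attempt should be measured against the Stevens--de Zeeuw argument itself, and there your encoding departs from theirs in a way that creates a genuine gap. In your setup the point set $\mathcal{S}$ has size $\cI$ and the plane set has size $|L|$; Rudnev's theorem requires $|\mathcal{S}|\le|\Pi|$, i.e.\ $\cI\le|L|$, which is typically false (one can have $\cI$ as large as $|A|\,|L|$). Your suggested ``routine dyadic decomposition of $L$ by line-multiplicity'' does not help: splitting $L$ into pieces only shrinks $|L|$ while the associated incidence count on each piece is still at least the piece's size times the minimum multiplicity, so the inequality remains the wrong way round. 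A tell-tale sign is that your main term would yield $\cI\ll |A|^{4/3}|L|^{2/3}$, the full Szemer\'edi--Trotter exponent for $|P|=|A|^2$, which is not known over $\F$ even for Cartesian products under the stated hypothesis; that alone indicates the argument cannot go through as written.

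The actual Stevens--de Zeeuw proof avoids this by applying Cauchy--Schwarz over a \emph{single} coordinate of the Cartesian product rather than over all of $A\times A$. Writing $\cI=\sum_{a\in A}\cI_a$ with $\cI_a=|\{(b,\ell):(a,b)\in\ell\}|$, one gets $\cI^2\le|A|\sum_a\cI_a^2$, and the second moment is encoded as incidences between a point set and a plane set in $\F^3$ each of size $|A|\,|L|$ (roughly, points $(a,s,t)$ with $a\in A$, $(s,t)\in L$, and planes indexed the same way via the relation $sa+t=s'a'+t'$). The symmetry makes Rudnev's size hypothesis automatic, the characteristic condition becomes exactly $|A|\,|L|=O(p^2)$, and the collinearity parameter is $K\le\max(|A|,k)$ with $k$ the maximum number of concurrent lines in $L$, handled by a standard pruning. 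Rudnev then gives $\sum_a\cI_a^2\ll(|A||L|)^{3/2}+K|A||L|$, hence $\cI\ll|A|^{5/4}|L|^{3/4}$ from the leading term. Reworking your lift so that both sides have size $|A||L|$ is the missing idea.
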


Sevens and de Zeeuw very reasonably imposed the additional condition $|A| < |L| < |A|^3$ because in other ranges, the Cauchy-Schwartz point-line incidence bound is better to theirs. For simplicity of argument, we omit the condition. A sanity check that allowing $|L| \leq |A|$ of $|L| \geq |A|^3$ does not hurt.
\begin{enumerate}
\item If $|L| \leq|A|$, we have $\cI(A \times A , L) \leq |L| |A| \leq |L|^{3/4} |A|^{5/4}$.
\item If $|A|^3 \leq|L|$, we have $\cI(A \times A , L) \leq |L|^{1/2} |A|^2 \leq |L|^{3/4} |A|^{5/4}$.
\end{enumerate}

Next we reformulate the lemma of Bourgain, Katz and Tao in terms of the incidence function $i$, c.f.~\cite[Lemma 1]{GPInc}. Sums are over all lines in $\F^2$ and not just those incident to some point of $A \times A$.
\begin{lemma}[Bourgain, Katz and Tao]\label{BKT i}
Let $A \subseteq \F$.
\[
\sum_{\text{ all lines } \ell}  i(\ell)^2 = |A|^4 + p |A|^2.
\]
In particular
\[
\sum_{\text{ all lines } \ell} \left( i(\ell) - \frac{|A|^2}{p} \right)^2 \leq p |A|^2.
\]
\end{lemma}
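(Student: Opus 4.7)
The plan is a direct double-counting argument. To handle $\sum_\ell i(\ell)^2$, I would write
\[
\sum_\ell i(\ell)^2 = \sum_\ell \#\{(u,v) \in (A \times A)^2 : u, v \in \ell\} = \sum_{(u,v) \in (A \times A)^2} \#\{\ell : u, v \in \ell\},
\]
and then invoke two elementary facts about the affine plane $\F^2$: exactly $p+1$ lines pass through a given point, and exactly one line passes through any two distinct points. Splitting the outer sum according to whether $u = v$ or $u \neq v$ then gives
\[
\sum_\ell i(\ell)^2 = (p+1)|A|^2 + (|A|^4 - |A|^2) = |A|^4 + p|A|^2,
\]
which is the first identity.

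For the ``in particular'' statement I would expand the square, so that two additional ingredients are needed: the value of $\sum_\ell i(\ell)$, and the total number of lines. The same kind of exchange of summation yields $\sum_\ell i(\ell) = (p+1)|A|^2$, and the number of lines in $\F^2$ is $p(p+1)$. Substituting these and the first identity into the expansion of the square, everything collapses:
\[
\sum_\ell \left(i(\ell) - \frac{|A|^2}{p}\right)^2 = \bigl(|A|^4 + p|A|^2\bigr) - \frac{2|A|^2}{p}(p+1)|A|^2 + \frac{|A|^4}{p^2} \cdot p(p+1) = p|A|^2 - \frac{|A|^4}{p},
\]
which is at most $p|A|^2$, as required.

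There is essentially no obstacle here: the lemma is a pure combinatorial identity on the affine plane. The only small subtlety worth flagging is the separation of the diagonal contribution $u = v$ from the off-diagonal one; overlooking the diagonal would lose the $p|A|^2$ term and make the variance bound vacuous. The other mild point is that the constants $p+1$ (lines through a point) and $p(p+1)$ (total lines) must be remembered correctly, but both arise from the same basic duality on $\F^2$.
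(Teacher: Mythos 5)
Your proof is correct: the double count of pairs of incident points (splitting diagonal from off-diagonal), the identity $\sum_\ell i(\ell) = (p+1)|A|^2$, and the line count $p(p+1)$ all combine exactly as you compute, giving $\sum_\ell (i(\ell)-|A|^2/p)^2 = p|A|^2 - |A|^4/p \leq p|A|^2$. The paper itself states this lemma without proof, citing Bourgain--Katz--Tao and \cite{GPInc}; your argument is the standard one given in those sources, so there is nothing to add.
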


The simple yet powerful lemma of Bourgain, Katz, Tao was implicitly extended to not necessarily Cartesian product sets by Vinh~\cite{Vinh2011}. The paper~\cite{GPInc} contains other applications.

Next let $M$ be a parameter and set
\begin{equation}\label{LM}
L_M = \{  M <  i(\ell) \leq  2M \}
\end{equation}
to be the collection of lines from $L$ that are incident to between $M$ and $2M$ points in $A \times A$. We begin with an easy consequence of Lemma~\ref{BKT i}.
\begin{lemma}\label{BKT In}
Let $A \subseteq \F$ and $M$ be a real number. Suppose that $M \geq 2 |A|^2 /p$, then the set $L_M$ defined in~\eqref{LM} satisfies $|L_M| \leq 4 p |A|^2 / M^2$.
\end{lemma}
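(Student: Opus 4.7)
The plan is to apply the variance-style bound in the second assertion of Lemma~\ref{BKT i}, namely
\[
\sum_{\text{all lines } \ell} \left( i(\ell) - \frac{|A|^2}{p} \right)^2 \leq p|A|^2,
\]
and simply restrict the left-hand side to the set $L_M$. This is the natural move because $|A|^2/p$ is the expected value of $i(\ell)$, and the condition $M \geq 2|A|^2/p$ is tailor-made so that every line $\ell \in L_M$ lies well above this mean.

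First I would observe that for any $\ell \in L_M$ we have $i(\ell) > M$, and combining with $M \geq 2|A|^2/p$ gives
\[
i(\ell) - \frac{|A|^2}{p} > M - \frac{M}{2} = \frac{M}{2}.
\]
Squaring and summing over $\ell \in L_M$ yields
\[
|L_M| \cdot \frac{M^2}{4} \leq \sum_{\ell \in L_M} \left( i(\ell) - \frac{|A|^2}{p} \right)^2 \leq \sum_{\text{all } \ell} \left( i(\ell) - \frac{|A|^2}{p} \right)^2 \leq p|A|^2,
\]
which rearranges to the claimed bound $|L_M| \leq 4p|A|^2/M^2$.

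There is no real obstacle here; the only subtlety is remembering that Lemma~\ref{BKT i} sums over all lines in $\F^2$ (not merely those containing a point of $A \times A$), so that extending the $L_M$ sum to a sum over all lines only increases the right-hand side. The hypothesis $M \geq 2|A|^2/p$ is used exactly once, to turn the pointwise lower bound $i(\ell) > M$ into a comparable lower bound on the deviation $i(\ell) - |A|^2/p$.
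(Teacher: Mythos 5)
Your proof is correct and follows essentially the same route as the paper: restrict the variance bound from Lemma~\ref{BKT i} to $L_M$ and use the hypothesis $M \geq 2|A|^2/p$ to lower-bound each deviation by $M/2$. The only cosmetic difference is that you bound $i(\ell) - |A|^2/p > M - M/2$ directly from $i(\ell) > M$, whereas the paper writes $i(\ell) - |A|^2/p \geq i(\ell)/2 \geq M/2$; these are interchangeable.
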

\begin{proof}
The hypothesis $\ds i(\ell) \geq \tfrac{2 |A|^2}{p}$ implies that $\ds i(\ell) - \tfrac{|A|^2}{p} \geq \tfrac{i(\ell)}{2}\geq \tfrac{M}{2}$.

Lemma~\ref{BKT i} now implies
\[
\frac{M^2}{4} |L_M| \leq \sum_{\ell \in L_M} \left( i(\ell) - \frac{|A|^2}{p} \right)^2  \leq \sum_{\text{ all lines } \ell} \left( i(\ell) - \frac{|A|^2}{p} \right)^2 \leq p |A|^2.
\]
The claim follows.
\end{proof}

We now feed this bound to Theorem~\ref{SdZ} to bound $L_M$. The use of Lemma~\ref{BKT In} is a little strange, because we use it to prove that $|L_M|$ is not too big, so we may apply Theorem~\ref{SdZ} and obtain a reasonable bound. The lemma plays a much more crucial role later.

\begin{lemma}\label{L_M}
Let $A \subseteq \F$ and $\ds M \geq 2 |A|^{3/2} / p^{1/2}$ be a real number.  Then he set $L_M$ defined in~\eqref{LM} satisfies
\[
|L_M| = O\left( \frac{|A|^5}{M^4} \right).
\]
In particular, under this hypothesis, 
\[ 
\sum_{\ell \in L_M} i(\ell)^3 =  O\left( \frac{|A|^5}{M} \right) \text{ and } \sum_{\ell \in L_M} i(\ell)^4 =  O(|A|^5).
\]
\end{lemma}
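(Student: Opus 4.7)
The plan is to apply Theorem~\ref{SdZ} to the family $L_M$, using Lemma~\ref{BKT In} only to verify the non-trivial hypothesis $|A||L_M| = O(p^2)$ of that theorem. Once this is in hand, the rest is a one-line comparison: since every line in $L_M$ is incident to at least $M$ points of $A \times A$, we have the trivial lower bound $M |L_M| \leq \cI(A \times A, L_M)$, and matching this against the Stevens--de Zeeuw upper bound $\cI(A \times A, L_M) = O(|L_M|^{3/4} |A|^{5/4})$ gives $M |L_M|^{1/4} = O(|A|^{5/4})$, which rearranges to the desired $|L_M| = O(|A|^5/M^4)$.

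To check the hypothesis of Theorem~\ref{SdZ}, I would first note that since $|A| \leq p$, the standing assumption $M \geq 2 |A|^{3/2}/p^{1/2}$ certainly implies $M \geq 2 |A|^2/p$, so Lemma~\ref{BKT In} applies and delivers $|L_M| \leq 4 p |A|^2 / M^2$. Multiplying by $|A|$ and using $M^2 \geq 4 |A|^3/p$ (which is just the square of the assumption) produces $|A| |L_M| \leq p^2$, exactly as required.

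The two ``in particular'' bounds then follow immediately from the crude inequality $i(\ell) \leq 2M$ for every $\ell \in L_M$: the sum of $i(\ell)^3$ is at most $(2M)^3 |L_M| = O(|A|^5/M)$, and the sum of $i(\ell)^4$ is at most $(2M)^4 |L_M| = O(|A|^5)$.

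The only real subtlety, as the author already flags, is the slightly circular flavour of the argument: one cannot invoke Theorem~\ref{SdZ} without the Cartesian-product hypothesis $|A||L| = O(p^2)$, and Lemma~\ref{BKT In}---whose own conclusion is strictly weaker than what we are trying to prove---is exactly the tool that certifies this hypothesis for $L_M$ in the stated range of $M$. There is no other obstacle; the inequalities line up cleanly because the threshold $M \geq 2 |A|^{3/2}/p^{1/2}$ is calibrated precisely so that the Bourgain--Katz--Tao bound on $|L_M|$ feeds admissibly into Stevens--de Zeeuw.
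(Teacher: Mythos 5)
Your proposal is correct and follows exactly the same route as the paper's own proof: use Lemma~\ref{BKT In} (valid since $M \geq 2|A|^{3/2}/p^{1/2} \geq 2|A|^2/p$ as $|A| \leq p$) to verify $|A||L_M| \leq p^2$, then compare the trivial lower bound $M|L_M| \leq \cI(A \times A, L_M)$ with the Stevens--de Zeeuw upper bound, and finish the ``in particular'' claims via $i(\ell) \leq 2M$. No differences worth noting.
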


\begin{proof}
The second claim follows by the first because, say, 
\[
\sum_{\ell \in L_M} i(\ell)^3 \leq 8 M^3 |L_M|.
\]

To establish the first, we apply Theorem~\ref{SdZ}. Therefore we must confirm the condition $|A| |L| = O(p^2)$. The hypothesis $M \geq 2 |A|^{3/2} / p^{1/2}$ implies $ M \geq 2 |A|^2 /p$. Lemma~\ref{BKT In} can therefore be applied and in conjunction with the hypothesis $M^2 \geq 4 |A|^3 / p$ gives
\[
|L_M| \leq \frac{4 p |A|^2}{M^2} \leq \f{p^2}{|A|}.
\]
Hence, $|A| |L_M| \leq p^2$ and Theorem~\ref{SdZ} may be applied. It gives
\[
M |L_M| \leq \cI(A \times A , L_M) = O(|A|^{5/4} |L_M|^{3/4}).
\]
The stated bound follows.
\end{proof}

\section[A straightforward bound for T(A)]{A straightforward bound for $T(A)$}
\label{Easy T(A)}

Before proving Theorem~\ref{T(A)}, let us deduce from Lemma~\ref{BKT i} a straightforward bound for $T(A)$.

\begin{proposition}
Let $A\subseteq \F$. Then
\[
\left| T(A) - \left( \f{|A|^6}{p} + 2 |A|^4 \right) \right| \leq p |A|^3. 
\]
\end{proposition}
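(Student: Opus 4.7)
The plan is to compare $T(A) = \sum_\ell i(\ell)^3$ with the ``expected'' value $|A|^6/p + 2|A|^4$ by expanding about the mean $e := |A|^2/p$ and using the second-moment bound of Lemma~\ref{BKT i}. The most natural candidate, $\sum_\ell (i(\ell)-e)^3$, is sign-indefinite and hard to control directly, so instead I would study the manifestly non-negative quantity
\[
E \;:=\; \sum_\ell i(\ell)\bigl(i(\ell) - e\bigr)^2.
\]

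First I would expand $E$ using the first form of Lemma~\ref{BKT i}, $\sum_\ell i(\ell)^2 = |A|^4 + p|A|^2$, together with the elementary identity $\sum_\ell i(\ell) = |A|^2(p+1)$ (each of the $|A|^2$ points of $A\times A$ lies on exactly $p+1$ lines of $\F^2$). A short calculation then yields
\[
E \;=\; T(A) \;-\; \frac{|A|^6}{p} \;-\; 2|A|^4 \;+\; \frac{|A|^6}{p^2}.
\]

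Next I would bound $E$ from above. Since any line meets $A\times A$ in at most $|A|$ points (a non-vertical line is parametrised by its $x$-coordinate in $A$, a vertical line by its $y$-coordinate in $A$), the second form of Lemma~\ref{BKT i} gives
\[
E \;\le\; \max_\ell i(\ell) \cdot \sum_\ell (i(\ell)-e)^2 \;\le\; |A| \cdot p|A|^2 \;=\; p|A|^3.
\]
Combined with $E \ge 0$, rearranging produces the two-sided estimate
\[
-\frac{|A|^6}{p^2} \;\le\; T(A) - \frac{|A|^6}{p} - 2|A|^4 \;\le\; p|A|^3 - \frac{|A|^6}{p^2},
\]
and the residual $|A|^6/p^2$ is absorbed into $p|A|^3$ via the trivial inequality $|A| \le p$, giving the claimed bound.

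There is no real obstacle in the argument: the only small piece of ``art'' is the decision to multiply the cubic deviation $(i(\ell)-e)^3$ by the extra factor $i(\ell)$, replacing the cancellation-prone third moment by a non-negative quantity that the trivial bound $i(\ell) \le |A|$ converts into the already-controlled second moment. Everything else is routine algebra with the identity $\sum_\ell i(\ell) = |A|^2(p+1)$ and Lemma~\ref{BKT i}.
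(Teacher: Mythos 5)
Your argument is correct and is essentially the paper's own proof: both expand $T(A)=\sum_\ell i(\ell)^3$ around the nonnegative quantity $\sum_\ell i(\ell)\bigl(i(\ell)-|A|^2/p\bigr)^2$, evaluate the lower-order terms via $\sum_\ell i(\ell)=(p+1)|A|^2$ and Lemma~\ref{BKT i}, and finish with $i(\ell)\le|A|$ together with the second-moment bound. If anything, you are slightly more careful than the paper in explicitly noting that the leftover $|A|^6/p^2$ term is absorbed into $p|A|^3$ using $|A|\le p$.
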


\begin{proof}
Sums are over all lines in $\F^2$.  We combine the first part of Lemma~\ref{BKT i} with the identity $\sum_{\ell} i(\ell) = (p+1) |A|^2$, which follows from the fact that every point in $A \times A$ is incident to $p+1$ lines in $\F^2$.
\begin{align*}
T(A) 
& = \sum_\ell i(\ell)^3 \\
& = \sum_\ell i(\ell) \left(i(\ell) - \f{|A|^2}{p} \right)^2 + 2 \f{|A|^2}{p} \sum_\ell i(\ell)^2 - \left(\f{|A|^2}{p}\right)^2 \sum_\ell i(\ell) \\
& = \sum_\ell i(\ell) \left(i(\ell) - \f{|A|^2}{p} \right)^2  + \f{|A|^6}{p} + 2 |A|^4 - \f{|A|^6}{p^2}.
\end{align*}
The claim now follows from the fact that $i(\ell) \leq |A|$ and the second part of Lemma~\ref{BKT i}  because
\[
\left| T(A) - \left( \f{|A|^6}{p} + 2 |A|^4 \right) \right| \leq \sum_\ell i(\ell) \left(i(\ell) - \f{|A|^2}{p} \right)^2 \leq |A| \sum_\ell \left(i(\ell) - \f{|A|^2}{p} \right)^2 \leq p|A|^3. 
\]
\end{proof}

We therefore see that to improve the proposition, and hence the range of $|A|$ for which $T(A) = O(|A|^6/p)$, we must show that it is impossible for ``the mass'' of
\[
\sum_\ell \left(i(\ell) - \f{|A|^2}{p} \right)^2 \approx p|A|^2
\] 
to come from lines that satisfy $i(\ell) = \Omega(|A|)$. In other words, we must roughly speaking show that it cannot be the case that $\Omega(p)$ lines are incident to $\Omega(|A|)$ points in $A \times A$. The theorem of Stevens and de Zeeuw guarantees this. In fact Theorem~\ref{SdZ} implies that there are $O(|A|)$ lines incident to $\Omega(|A|)$ points in $A \times A$, which is nearly optimal. To maximise the gain we perform a more careful analysis. 
 
\section[Proof of Theorem~1]{Proof of Theorem~\ref{T(A)}} 

For $T(A)$ we break the sum of the cubes of $i(\ell)$ in three parts: 
\begin{enumerate}
\item Those where $i(\ell)$ is small (these give the term that resembles the expected count).
\item  Those where $i(\ell)$ is of medium size (controlled by Lemma~\ref{BKT In}).
\item  Those  where $i(\ell)$ is large (controlled by Lemma~\ref{L_M} and dyadic decomposition). 
\end{enumerate}
The details are as follows.
\begin{align}\label{T3}
T(A) & 
= \sum_{\text{ all lines } \ell} i(\ell)^3 \nonumber \\
& = \sum_{ i(\ell) \leq \tfrac{2 |A|^2} {p}} i(\ell)^3 + \sum_{ \tfrac{2 |A|^2}{p} \leq i(\ell) \leq \tfrac{2 |A|^{3/2}}{p^{1/2}}} i(\ell)^3  + \sum_{ i(\ell) \geq \tfrac{2 |A|^{3/2}}{ p^{1/2}}} i(\ell)^3.
\end{align}
The first sum is bounded using the identity $\sum_{\ell} i(\ell) = (p+1) |A|^2$:
\[
\sum_{ i(\ell) \leq \tfrac{2 |A|^2} {p}} i(\ell)^3 \leq \f{4 |A|^4}{p^2} \sum_{\ell} i(\ell) = O \left(\f{|A|^6}{p}\right).
\]
The second sum is bounded using Lemma~\ref{BKT In} and the observation that $i(\ell) \geq 2 |A|^2/p$, then $i(\ell) \leq 2 (i(\ell) - |A|^2 /p)$:
\[
\sum_{ \tfrac{2 |A|^2}{p} \leq i(\ell) \leq \tfrac{2 |A|^{3/2}}{p^{1/2}}} i(\ell)^3  \leq \frac{2 |A|^{3/2}}{p^{1/2}} \sum_{\ell} 4 \left(i(\ell) - \frac{|A|^2}{p} \right)^2 = O( p^{1/2} |A|^{7/2}).
\]
The third sum is bounded using Lemma~\ref{L_M} and dyadic decomposition: 
\begin{align*}
\sum_{ i(\ell) \geq \tfrac{2 |A|^{3/2}}{ p^{1/2}}} i(\ell)^3 
& = \sum_{ \tfrac{2 |A|^{3/2}}{ p^{1/2}} \leq 2^{j} \leq \tfrac{|A|}{2}}  \sum_{\ell \in L_{2^j}} i(\ell)^3 \\
& = O \left(  \sum_{ 2^{j} \geq \tfrac{2 |A|^{3/2}}{ p^{1/2}}}  \frac{|A|^5}{2^j} \right) \\
& = O \left( \frac{|A|^5}{\frac{|A|^{3/2}}{p^{1/2}}} \right) = O(p^{1/2} |A|^{7/2}).
\end{align*}
Substituting the three bounds into~\eqref{T3} gives $T\ds (A) = O\left(\f{|A|^6}{p} + p^{1/2} |A|^{7/2}\right)$.

A nearly identical argument works for $Q(A)$.
\begin{align}\label{Q3}
Q(A) & 
= \sum_{\text{ all lines } \ell} i(\ell)^4 \nonumber \\
& = \sum_{ i(\ell) \leq \tfrac{2 |A|^2} {p}} i(\ell)^4 + \sum_{ \tfrac{2 |A|^2}{p} \leq i(\ell) \leq \tfrac{2 |A|^{3/2}}{p^{1/2}}} i(\ell)^4  + \sum_{ i(\ell) \geq \tfrac{2 |A|^{3/2}}{ p^{1/2}}} i(\ell)^4.
\end{align}
The first sum is bounded using the identity $\sum_{\ell} i(\ell) = (p+1) |A|^2$:
\[
\sum_{ i(\ell) \leq \tfrac{2 |A|^2} {p}} i(\ell)^4 \leq \f{8 |A|^6}{p^3} \sum_{\ell} i(\ell) = O \left(\f{|A|^8}{p^2}\right).
\]
The second sum is bounded using Lemma~\ref{BKT In}:
\[
\sum_{ \tfrac{2 |A|^2}{p} \leq i(\ell) \leq \tfrac{2 |A|^{3/2}}{p^{1/2}}} i(\ell)^4  \leq \frac{4 |A|^{3}}{p} \sum_{\ell} 4 \left(i(\ell) - \frac{|A|^2}{p} \right)^2 =O (|A|^{5}).
\]
The third sum is bounded using Lemma~\ref{L_M} (and is this time of greater order of magnitude than the second): 
\begin{align*}
\sum_{ i(\ell) \geq \tfrac{2 |A|^{3/2}}{ p^{1/2}}} i(\ell)^3 
& = \sum_{ \tfrac{2 |A|^{3/2}}{ p^{1/2}} \leq 2^{j} \leq \tfrac{|A|}{2}}  \sum_{\ell \in L_{2^j}} i(\ell)^3 \\
& = O \left(  \sum_{ 2^{j} \leq |A|}  |A|^5 \right) \\
& = O(\log(|A|) |A|^5).
\end{align*}
Substituting the three bounds into~\eqref{Q3} gives $\ds Q(A) = O\left(\f{|A|^8}{p^2} + \log(|A|) |A|^5 \right)$.

\phantomsection

\addcontentsline{toc}{section}{References}

\bibliography{all}

\vskip 0.5cm

\hspace{20pt} Department of Mathematics, University of Georgia, Athens, GA, USA.

\hspace{20pt} \textit{Email address}: \href{mailto:giorgis@cantab.net}{giorgis@cantab.net}

\end{document}